\newtheorem{thm}{Theorem}[section]
\newtheorem{cor}[thm]{Corollary}
\newtheorem{prop}[thm]{Proposition}
\def\Do{{\mathsf D}}      
\def\Up{{\mathsf U}}      
\def\ADG{{\mathscr A}}    
\def\SOT{{\mathcal T}}    
\def\Ass{{\mathsf{Asso}}}  
\def\Perm{{\mathsf{Perm}}}
\def\R{{\mathbb R}}       
\newcommand{\vect}[1]{\overrightarrow{#1}}
\def\Stab{{\textnormal{Stab}}}
\def\D{{\mathcal D}}
\begin{document}


\title[Center of gravity of the associahedron]{The centers of gravity of the associahedron and of the permutahedron are the same}

\author[C. Hohlweg]{Christophe~Hohlweg}
\address[Christophe Hohlweg]{Universit\'e du Qu\'ebec \`a Montr\'eal\\
LaCIM et D\'epartement de Math\'ematiques\\ CP 8888 Succ. Centre-Ville\\
Montr\'eal, Qu\'ebec, H3C 3P8\\ CANADA}
\email{hohlweg.christophe@uqam.ca}
\urladdr{http://www.lacim.uqam.ca/\~{}hohlweg}

\author[J. Lortie]{Jonathan~Lortie}
\address[Jonathan Lortie]{Universit\'e du Qu\'ebec \`a Montr\'eal\\
LaCIM et D\'epartement de Math\'ematiques\\ CP 8888 Succ. Centre-Ville\\
Montr\'eal, Qu\'ebec, H3C 3P8\\ CANADA}
\email{lortie.jonathan@courrier.uqam.ca}

\author[A. Raymond]{Annie~Raymond}
\address[Annie Raymond]{Berlin Mathematical School\\
Strasse des 17. Juni 136\\
Berlin, 10623, Germany}
\email{raymond@math.tu-berlin.de}

\date{\today}

\thanks{$^*$ This work is supported by FQRNT and NSERC.
It is the result of a summer undergraduate research internship
supported by LaCIM}

\begin{abstract}
\noindent In this article, we show that Loday's realization of the
associahedron has the the same center of gravity than the
permutahedron. This proves an observation made by F.~Chapoton.

We also prove that this result holds for the associahedron and the
cyclohedron as realized by the first author and C.~Lange.
 \end{abstract}

 \maketitle


\section{Introduction.}\label{se:Intro}

In 1963, J.~Stasheff discovered the associahedron~\cite{stasheff,stasheff2}, a polytope of great importance  in algebraic topology.
 The associahedron in $\mathbb R^n$ is a simple
$n-1$-dimensional convex polytope. The classical realization of the associahedron given by
 S.~Shnider and S.~Sternberg in \cite{shnider_sternberg} was
 completed by J.~L.~Loday in 2004~\cite{loday}. Loday gave a
 combinatorial algorithm to compute the integer coordinates of the
 vertices of the associahedron, and showed that it can be obtained
 naturally from the classical permutahedron of dimension $n-1$.
 F.~Chapoton observed that the centers of gravity of the
 associahedron and of the permutahedron are the same \cite[Section 2.11]{loday}.
   As far as we know, this property of Loday's realization has never been proved.

\smallskip

In 2007, the first author and C.~Lange gave a family of realizations
of the associahedron that contains the classical realization of the
associahedron. Each of these realizations is also obtained naturally
from the classical permutahedron \cite{realisation1}. They
conjectured that for any of these realizations, the center of
gravity coincide with the center of gravity of the permutahedron. In
this article, we prove this conjecture to be true.

\smallskip

The associahedron fits in a larger family of polytopes, {\em
generalized associahedra}, introduced by S.~Fomin and A.~Zelevinsky
in \cite{fomin_zelevinsky} within the framework of cluster algebras
(see \cite{chapoton_fomin_zelevinsky,realisation2} for their
realizations).

In 1994, R.~Bott and C.~Taubes discovered the
cyclohedron~\cite{bott_taubes} in connection with knot theory. It
was rediscovered independently by R. Simion \cite{simion}. In
\cite{realisation1}, the first author and C.~Lange also gave a
family of realizations for the cyclohedron, starting with the
permutahedron of type $B$.

We also show that the centers of gravity of the cyclohedron and of
the permutahedron of type $B$ are the same.

The article is organized as follows. In \S\ref{se:1}, we first
recall the realization of the permutahedron and how to compute its
center of gravity. Then we compute the center of gravity of Loday's
realization of the associahedron. In order to do this, we partition
its vertices into isometry classes of triangulations, which
parameterize  the vertices, and we show that the center of gravity
for each of  those classes is the center of gravity of the
 permutahedron.

 In \S\ref{se:2}, we show that the computation of the center of
 gravity of any of the realizations given by the first author and
 C.~Lange is reduced to the computation of the center of gravity of the classical
 realization of the associahedron. We do the same for the cyclohedron in \S\ref{se:3}.

We are grateful to Carsten Lange for allowing us to use some of the pictures he made in~\cite{realisation1}.

\section{Center of gravity of the classical permutahedron and
associahedron}\label{se:1}

\subsection{The permutahedron}
Let $S_n$ be the symmetric group acting on the set
$[n]=\{1,2,\dots,n\}$. The {\em permutahedron} $\Perm(S_n)$ is the
classical $n-1$-dimensional simple convex polytope defined as the
convex hull of the points
$$
M(\sigma)=(\sigma(1),\sigma(2),\dots, \sigma (n))\in\mathbb R^n,\qquad \forall \sigma\in S_n.
$$
The {\em center of gravity} (or {\em isobarycenter}) is the unique point $G$ of $\mathbb R^n$  such that
$$
\sum_{\sigma\in S_n} \vect{GM(\sigma)}=\vect 0.
$$
Since the permutation $w_0:i\mapsto n+1-i$ preserves $\Perm(S_n)$,
we see, by sending $M(\sigma)$ to
$$
M(w_0\sigma)=(n+1-\sigma(1),n+1-\sigma(2),\dots, n+1-\sigma (n)),
$$
that the center of gravity is  $
G=(\frac{n+1}{2},\frac{n+1}{2},\dots,\frac{n+1}{2}). $

\subsection{Loday's realization}
We present here the realization of the associahedron given by
J.~L.~Loday \cite{loday}. However, instead of using planar binary
trees, we use triangulations of a regular polygon to parameterize
the vertices of the associahedron (see \cite[Remark
1.2]{realisation1}).

\subsubsection{Triangulations of a regular polygon} Let $P$ be a
regular $(n+2)$-gon in the Euclidean plane with vertices
$A_0,A_1,\dots,A_{n+1}$ in counterclockwise direction. A {\em
triangulation of $P$} is a set of $n$ noncrossing diagonals of $P$.

Let us be more explicit. A {\em triangle
of $P$} is a triangle whose vertices are vertices of $P$. Therefore
a side of a triangle of $P$ is either an edge or a diagonal of $P$.
A  triangulation of $P$ is then a collection of $n$ distinct
triangles of $P$ with noncrossing sides. Any of the triangles in $T$ can be described as  $A_i A_j A_k$ with $0\leq i<j<k\leq n+1$. Each $1\leq j\leq n$ corresponds to a unique triangle $\Delta_j(T)$ in $T$ because the sides of  triangles in $T$ are noncrossing.

Therefore we write $T=\{\Delta_1(T),\dots, \Delta_n(T)\}$ for a
triangulation $T$, where $\Delta_j(T)$ is the unique triangle in $T$
with vertex $A_j$ and the two other vertices  $A_i$ and $A_k$
satisfying the inequation $0\leq i<j<k\leq n+1$.

Denote by $\SOT_{n+2}$ the set of triangulations of $P$.

\subsubsection{Loday's realization of the associahedron}

Let $T$ be a triangulation of $P$. The {\em weight} $\delta_j(T)$ of
the triangle $\Delta_j(T)=A_i A_jA_k$, where $i<j<k$, is the positive
number
$$
\delta_j(T)=(j-i)(k-j).
$$
The weight $\delta_j(T)$ of $\Delta_j(T)$ represents the product of the number of
boundary edges of $P$ between $A_i$ and $A_j$ passing through vertices
indexed by smaller numbers than $j$ with the number of boundary
edges of $P$ between $A_j$ and $A_k$ passing through vertices indexed
by larger numbers than $j$.

The {\em classical associahedron} $\Ass(S_n)$ is obtained as the
convex hull of the points
$$
M(T)=(\delta_1(T),\delta_2(T),\dots, \delta_n(T))\in \mathbb
R^n,\quad\forall T\in\SOT_{n+2}.
$$
We are now able to state our first result.

\begin{thm}\label{thm:Main} The center of gravity of $\Ass(S_n)$ is $G=(\frac{n+1}{2},\frac{n+1}{2},\dots,\frac{n+1}{2})$.
 \end{thm}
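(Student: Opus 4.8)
The plan is to compute the barycenter $\frac{1}{|\SOT_{n+2}|}\sum_{T\in\SOT_{n+2}}M(T)$ one coordinate at a time. Write $c_m=\frac{1}{m+1}\binom{2m}{m}$ for the Catalan number, and recall that $|\SOT_{n+2}|=c_n$. Since $G=(\tfrac{n+1}{2},\dots,\tfrac{n+1}{2})$, it suffices to show that for every $j\in\{1,\dots,n\}$ the $j$-th coordinate sum
\[
F(j):=\sum_{T\in\SOT_{n+2}}\delta_j(T)
\]
equals $\tfrac{n+1}{2}\,c_n$. I would deduce this from two independent facts: that $F(j)$ does not depend on $j$, and that the coordinates of each individual vertex sum to $\binom{n+1}{2}$.

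To evaluate $F(j)$, the idea is to exchange the two summations and group triangulations according to the triangle $A_iA_jA_k$ sitting at $A_j$. Such a triangle, with $0\le i<j<k\le n+1$, cuts $P$ into three smaller polygons along the chords $A_iA_j$, $A_jA_k$, $A_iA_k$, and these are triangulated independently; hence the number of $T\in\SOT_{n+2}$ containing it is $c_{j-i-1}\,c_{k-j-1}\,c_{n+1-k+i}$. Substituting $a=j-i$ and $b=k-j$ turns the weight $\delta_j=(j-i)(k-j)$ into $ab$ and yields
\[
F(j)=\sum_{a=1}^{j}\sum_{b=1}^{n+1-j}ab\,c_{a-1}c_{b-1}c_{n+1-a-b}.
\]

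The key step — and the one I expect to be the main obstacle — is to prove that this is independent of $j$. I would compute $F(j)-F(j-1)$ by comparing the two index rectangles; everything cancels except the boundary strips $a=j$ and $b=n+2-j$, giving
\[
F(j)-F(j-1)=j\,c_{j-1}\,\Sigma(n+1-j)-(n+2-j)\,c_{n+1-j}\,\Sigma(j-1),
\]
where $\Sigma(N):=\sum_{a=1}^{N}a\,c_{a-1}c_{N-a}$. The crux is the closed form $\Sigma(N)=\tfrac{N+1}{2}\,c_N$, which follows from the Catalan convolution $\sum_{a=0}^{N-1}c_a c_{N-1-a}=c_N$ together with the symmetry $a\leftrightarrow N-1-a$. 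Inserting it makes the two terms coincide, so $F(j)=F(j-1)$ and $F$ is constant in $j$.

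It then remains to identify the common value, for which I would use that $\sum_{j=1}^{n}\delta_j(T)=\binom{n+1}{2}$ for \emph{every} triangulation $T$. Indeed $\delta_j(T)$ counts the pairs consisting of one boundary edge below the left side and one below the right side of $\Delta_j(T)$, and for each pair among the $n+1$ boundary edges $A_0A_1,\dots,A_nA_{n+1}$ there is exactly one triangle of $T$ separating them in this way (the ``lowest common ancestor'' triangle); summing over $j$ counts each pair once. Hence $\sum_{j=1}^{n}F(j)=c_n\binom{n+1}{2}$, and since all $F(j)$ are equal, each equals $\tfrac{n+1}{2}\,c_n$, which is the assertion. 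As a consistency check, the reflection $A_\ell\mapsto A_{n+1-\ell}$ carries $\delta_j(T)$ to $\delta_{n+1-j}$ of the reflected triangulation, so $F(j)=F(n+1-j)$; this symmetry halves the bookkeeping but on its own does not force the common value, which is why the convolution identity above does the essential work.
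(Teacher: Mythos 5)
Your proof is correct, but it takes a genuinely different route from the paper's. The paper deduces Theorem~\ref{thm:Main} from a finer statement (Theorem~\ref{thm:key}): the vertices $M(T)$ indexed by each single orbit of the dihedral group $\D_{n+2}$ on $\SOT_{n+2}$ already have barycenter $G$. That in turn rests on Proposition~\ref{prop:canonique}, the identity $\sum_{f\in\D_{n+2}}\delta_j(f\cdot T)=(n+1)(n+2)$ for every fixed $T$, proved by an induction on $j$ that matches triangulations in pairs under explicit rotations and reflections of the polygon; no enumeration of $\SOT_{n+2}$ is ever needed. You instead compute the global coordinate sum $F(j)=\sum_{T}\delta_j(T)$ head-on: conditioning on the triangle at $A_j$ gives the triple Catalan convolution, the telescoping $F(j)-F(j-1)$ reduces to the identity $\Sigma(N)=\frac{N+1}{2}c_N$ (your symmetry-plus-convolution proof of which is right, and which you only ever invoke for $N\geq 1$, where it is valid --- it fails at $N=0$), and the common value is pinned down by the hyperplane relation $\sum_j\delta_j(T)=\binom{n+1}{2}$, for which your ``lowest common ancestor'' count of pairs of boundary edges is a correct argument (this relation is Loday's observation that $\Ass(S_n)$ lies in the same hyperplane as the permutahedron, which the paper does not actually use). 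Each approach buys something: yours is self-contained, elementary, and makes the Catalan combinatorics explicit; the paper's is computation-free and, more importantly, its orbit-by-orbit refinement is exactly what is reused later --- for the realizations $\Ass_\ADG(S_n)$ and especially for the cyclohedron, whose vertex set is a union of full $\D_{2n+2}$-orbits of centrally symmetric triangulations, a situation where a global average over all of $\SOT_{2n+2}$ would tell you nothing. So your argument proves Theorem~\ref{thm:Main} but would not, as it stands, give Theorem~\ref{thm:key} or Corollary~\ref{cor:Main}.
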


In order to prove this theorem, we need to study closely a certain partition of the vertices of $P$.

\subsection{Isometry classes of triangulations}\label{se:centergravity}
As $P$ is a regular $(n+2)$-gon, its isometry group is the dihedral
group $\D_{n+2}$ of order $2(n+2)$. So $\D_{n+2}$ acts on the set
$\SOT_{n+2}$ of all triangulations of $P$: for $f\in\D_{n+2}$ and
$T\in\SOT_{n+2}$, we have $f\cdot T\in\SOT_{n+2}$. We denote by
$\mathcal O (T)$ the orbit of $T\in\SOT_{n+2}$ under the action of
$\D_{n+2}$.

We know that $G$ is the center of gravity of $\Ass(S_n)$ if and only if
$$
\sum_{T\in\SOT_{n+2}} \vect{GM(T)} =\vect 0.
$$
As the orbits of the action of $\D_{n+2}$ on $\SOT_{n+2}$ form a partition of the set $\SOT_{n+2}$, it is sufficient to compute
$$
\sum_{T\in\mathcal O} \vect{GM(T)}
$$
for any orbit $\mathcal O$. The following key observation implies
directly Theorem~\ref{thm:Main}.

\begin{thm}\label{thm:key} Let $\mathcal O$ be an orbit of the action of $\D_{n+2}$ on $\SOT_{n+2}$, then $G$ is the center of gravity of $\{M(T)\,|\, T\in\mathcal O\}$. In particular, $
\sum_{T\in\mathcal O} \vect{GM(T)}=\vect 0.
$
\end{thm}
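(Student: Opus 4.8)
The plan is to average over the cyclic subgroup $\langle\rho\rangle$ of $\D_{n+2}$ generated by the rotation $\rho\colon A_l\mapsto A_{l+1}$ (indices mod $n+2$). Since any $\D_{n+2}$-orbit $\mathcal O$ decomposes into $\langle\rho\rangle$-orbits, and the center of gravity of a disjoint union of point sets each centered at $G$ is again $G$, it suffices to prove the sharper statement that \emph{every} rotation orbit is centered at $G$; equivalently, that for every triangulation $T$,
\[
\sum_{t=0}^{n+1} M(\rho^t T) \;=\; \frac{(n+1)(n+2)}{2}\,(1,1,\dots,1).
\]
The value here is forced: summing the coordinates and using that $\sum_{j}\delta_j(T)=\binom{n+1}{2}$ is independent of $T$ pins down the common entry, so the whole content is that the vector on the left is constant across coordinates.

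To attack this identity I would first record the geometric meaning of the weight: $\delta_j(U)$ is the product of the lengths of the two boundary arcs adjacent to $A_j$ in the middle triangle $\Delta_j(U)$, that is, the two arcs that do \emph{not} contain the seam edge $A_0A_{n+1}$. Because the sum is additive over the triangles of $T$, I would write $\sum_t M(\rho^t T)=\sum_{\Delta\in T}W(\Delta)$, where $W(\Delta)=\sum_{t}\mathrm{wt}(\rho^t\Delta)\,e_{m(t)}$ collects the contributions of a single triangle as it is rotated ($e_m$ being the $m$-th standard basis vector and $m(t)$ the middle vertex of $\rho^t\Delta$). A direct analysis shows that $W(\Delta)$ depends only on the arc-type $(a_1,a_2,a_3)$ of $\Delta$: as $t$ runs over $\{0,\dots,n+1\}$ the seam edge lands inside the arc of length $a_m$ for exactly $a_m$ values of $t$, and for those $t$ the middle vertex is the vertex opposite that arc, so the contribution is the product of the other two arc-lengths, spread over an interval of $a_m$ consecutive coordinates. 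This gives an explicit formula for $W(\Delta)$ as a sum of three ``rectangular'' blocks.

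The crux is then to show that $\sum_{\Delta\in T}W(\Delta)$ is a constant vector. This cannot come from a triangle-by-triangle argument: each $W(\Delta)$ is symmetric under $j\mapsto n+1-j$ but is \emph{not} constant (a triangle of type $(1,2,3)$, say, yields a genuinely non-constant block vector), so the way the triangles fit together must enter. I would prove constancy by induction along the flip graph. Every triangulation is connected to the fan by diagonal flips, so it is enough to (i) verify the identity for the fan by direct computation, and (ii) show that a single flip—replacing the two triangles of a quadrilateral $A_iA_jA_kA_l$ by the pair cut off by the other diagonal—leaves $\sum_{\Delta\in T}W(\Delta)$ unchanged. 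Step (ii) reduces to a finite identity in the four arc-lengths $b_1,b_2,b_3,b_4$ (with $b_1+b_2+b_3+b_4=n+2$) into which the quadrilateral divides the boundary.

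The main obstacle is precisely this flip-invariance. The difficulty is that the coordinate set $\{1,\dots,n\}$ is \emph{not} cyclically symmetric—the definition of $\delta_j$ singles out the seam $A_0A_{n+1}$—so flipping a diagonal shifts the supporting intervals of the blocks $W(\Delta)$ by amounts that depend on the position of the quadrilateral relative to the seam. Carefully tracking these interval endpoints and checking that the blocks gained and lost cancel coordinate-by-coordinate is where essentially all the work lies; once this is established, connectivity of the flip graph together with the single fan computation completes the proof of the displayed identity, and hence of the theorem.
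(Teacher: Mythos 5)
Your opening reduction is where the argument breaks: it is \emph{not} true that every rotation orbit is centered at $G$, so the ``sharper statement'' you propose to prove is false and the reflections in $\D_{n+2}$ cannot be discarded. Concretely, take $n=4$ and the triangulation $T$ of the hexagon with diagonals $A_0A_2$, $A_0A_3$, $A_3A_5$. Then $M(T)=(1,2,6,1)$, $M(\rho T)=(3,1,2,4)$, $M(\rho^2 T)=(4,3,1,2)$, and $\rho^3 T=T$, so the rotation orbit of $T$ has three elements and
\[
\sum_{t=0}^{5} M(\rho^t T) \;=\; 2\,(8,6,9,7) \;=\; (16,12,18,14) \;\neq\; \tfrac{(n+1)(n+2)}{2}(1,1,1,1)=(15,15,15,15).
\]
Only after adjoining the three reflected triangulations (whose $M$-vectors are $(4,2,1,3)$, $(1,6,2,1)$, $(2,1,3,4)$, summing to $(7,9,6,8)$) does one obtain the constant vector $(15,15,15,15)$; this confirms the theorem for the full dihedral orbit while refuting your displayed identity. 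The same example kills step (ii) of your flip-graph plan: the quantity $\sum_{\Delta\in T}W(\Delta)=\sum_t M(\rho^t T)$ equals $(15,15,15,15)$ for a fan triangulation but $(16,12,18,14)$ for the snake above, so it is not flip-invariant, and no bookkeeping of the seam position can repair a quantity that genuinely varies from one triangulation to another. (Your per-triangle description of $W(\Delta)$ in terms of the three arc-lengths and the position of the seam is correct; it is only the claimed constancy of the total that fails.)

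The paper's proof avoids this trap precisely by summing over all of $\D_{n+2}$, rotations and reflections together. Proposition~\ref{prop:canonique} establishes $\sum_{f\in\D_{n+2}}\delta_j(f\cdot T)=(n+1)(n+2)$ by induction on $j$: each rotation $g$ is paired with a reflection ($s_0g$ in the base case, $s_jg$ or $s_{j-2}g$ in the inductive step) so that the two corresponding weights sum to a constant, and the orbit statement then follows from the orbit--stabilizer theorem. Any salvage of your approach would have to reinstate the reflections from the outset, at which point the paper's coordinate-by-coordinate pairing is already the natural way to organize the computation.
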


Before proving this theorem, we need to prove the following result.

\begin{prop}\label{prop:canonique} Let $T\in\SOT_{n+2}$ and $j\in [n]$, then
$\displaystyle{\sum_{f\in \D_{n+2}} \delta_j(f\cdot T) =
(n+1)(n+2)}$.
\end{prop}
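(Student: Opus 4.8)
The plan is to read the weight $\delta_j$ exactly as the paper already describes it — as a product of two edge-counts — and then to recognize the whole group-sum as a count of ordered pairs of boundary edges of $P$. Write $\Delta_j(T)=A_iA_jA_k$ with $i<j<k$. By the description of $\delta_j$ recalled in the excerpt, $j-i$ is the number of boundary edges of $P$ on the arc $A_i,\dots,A_j$ (the ``lower ear'') and $k-j$ the number on the arc $A_j,\dots,A_k$ (the ``upper ear''), so $\delta_j(T)$ is the number of ordered pairs $(e,e')$ of boundary edges with $e$ in the lower ear and $e'$ in the upper ear of $\Delta_j(T)$. Consequently
$$\sum_{f\in\D_{n+2}}\delta_j(f\cdot T)=\#\mathcal S,\qquad \mathcal S=\{(f,e,e')\ :\ f\in\D_{n+2},\ e\ \text{in the lower ear and}\ e'\ \text{in the upper ear of}\ \Delta_j(f\cdot T)\}.$$

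First I would introduce the map
$$\Phi\colon\mathcal S\longrightarrow\{(\epsilon,\epsilon')\ :\ \epsilon\neq\epsilon'\ \text{boundary edges of}\ P\},\qquad \Phi(f,e,e')=(f^{-1}(e),f^{-1}(e')).$$
It is well defined, since $e,e'$ lie in different ears (hence are distinct) and an isometry permutes boundary edges. As $P$ has $n+2$ boundary edges, the target has exactly $(n+2)(n+1)$ elements, so the Proposition is equivalent to the assertion that $\Phi$ is a bijection. The point of pulling the two edges back by $f^{-1}$ is to strip away the orbit-dependent combinatorics of $f\cdot T$ and retain only intrinsic data of the fixed $T$. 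It is clarifying to reparametrize $\D_{n+2}$ by its simply transitive action on oriented boundary edges: sending $f$ to $f^{-1}(\vect{A_{n+1}A_0})$ identifies the group with the oriented boundary edges, and under this identification $\Delta_j(f\cdot T)$ is simply the triangle of $T$ occupying the $j$-th slot once the boundary of $P$ is cut open along $f^{-1}(\vect{A_{n+1}A_0})$ and its vertices are read off linearly.

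To prove bijectivity I would exhibit, for each ordered pair of distinct boundary edges, a unique preimage. Given distinct $\epsilon,\epsilon'$, I must produce the unique $f$ for which $f(\epsilon)$ is a lower-ear edge and $f(\epsilon')$ an upper-ear edge of $\Delta_j(f\cdot T)$. Transporting both conditions back by $f^{-1}$ and setting $\tau=f^{-1}(\Delta_j(f\cdot T))\in T$ and $C=f^{-1}(A_j)$, they say precisely that $C$ is a vertex of a triangle $\tau$ of $T$ and that $\epsilon,\epsilon'$ lie in the two distinct boundary ears flanking $C$, with $\epsilon$ on the side destined for the smaller indices. Such an incidence, together with the rule ``$\epsilon$-ear goes to the decreasing side'', pins down $f$ uniquely, because an isometry of a regular polygon is determined by the image of one vertex and a choice of orientation. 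Thus the Proposition collapses to a single combinatorial assertion about $T$ alone: among all incidences $(\tau,C)$ of a triangle of $T$ with one of its vertices that place $\epsilon,\epsilon'$ in the two ears flanking $C$, exactly one also meets the size constraints ``the $\epsilon$-ear has at most $j$ edges and the $\epsilon'$-ear at most $n+1-j$ edges''.

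This last step is where I expect the real work to lie, and it is exactly where the tension between the cyclic symmetry of $P$ and the linear order $0<1<\dots<n+1$ defining $\delta_j$ is concentrated. Several incidences can place $\epsilon,\epsilon'$ in flanking ears, but the ``no wrap-around'' inequalities $i\geq 0$ and $k\leq n+1$ — equivalently, that the far ear opposite $C$ be the one carrying the cut edge $A_{n+1}A_0$, so that $A_j$ is genuinely the middle-indexed vertex of $f(\tau)$ — must single out precisely one of them. I would establish this by ordering the candidate incidences and checking that the size of the $\epsilon$-ear grows monotonically as the incidence moves away from $\epsilon$ (while the $\epsilon'$-ear shrinks), so that the pair of thresholds is crossed exactly once; a warm-up verification on the pentagon and the hexagon (the first cases exhibiting several $\D_{n+2}$-orbits of triangulations) would guard against an off-by-one slip. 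Once $\Phi$ is known to be a bijection, $\#\mathcal S=(n+2)(n+1)$ gives $\sum_{f\in\D_{n+2}}\delta_j(f\cdot T)=(n+1)(n+2)$, as asserted.
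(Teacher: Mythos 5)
Your reduction is sound and genuinely different from the paper's proof (which proceeds by induction on $j$, pairing triangulations via explicit rotations and reflections so that weights sum in pairs to $(j+1)(n+1-j)$ or $n+1$). Reading $\delta_j(f\cdot T)$ as a count of ordered pairs of edges, one in each flanking ear of $\Delta_j(f\cdot T)$, and pulling back by $f^{-1}$ to land in the set of $(n+2)(n+1)$ ordered pairs of distinct boundary edges, is correct; so is the observation that each candidate incidence $(\tau,C)$ of $T$ placing $\epsilon,\epsilon'$ in the two ears flanking $C$ determines a unique $f\in\D_{n+2}$, and that such an $f$ contributes to the fibre over $(\epsilon,\epsilon')$ exactly when the no-wrap-around constraints hold. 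I verified your bijection on the triangle and the square; it works.

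The gap is that the statement everything now rests on --- ``exactly one incidence meets the size constraints'' --- is not proved, and the tool you propose for it, monotonicity of the ear sizes, is not sufficient. Writing $p_t,q_t,s_t$ for the sizes of the $\epsilon$-ear, the $\epsilon'$-ear and the opposite ear of the $t$-th candidate, the constraint is $p_t\le j\le p_t+s_t-1$ (note $p_t+q_t+s_t=n+2$), so what you must show is that the intervals $[p_t,\,p_t+s_t-1]$ \emph{tile} $[n]$: that is, after ordering, $p_1=1$, $q_N=1$, and $p_{t+1}=p_t+s_t$ for every $t$. Mere monotonicity of $p_t$ (increasing) and $q_t$ (decreasing) permits both gaps (a value of $j$ served by no candidate, e.g.\ $p=(1,3)$, $s=(1,1)$, $j=2$) and overlaps, so ``the thresholds are crossed exactly once'' is precisely the assertion to be proved, not a consequence of monotonicity. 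To close this you would need to (i) identify the candidates as the triangles on the path in the dual tree of $T$ from the triangle containing $\epsilon$ to the triangle containing $\epsilon'$, each with its canonical vertex $C_t$ (the unique vertex separated from the other two by the pair $\epsilon,\epsilon'$); (ii) analyse how the ears of consecutive candidates fit together across their shared diagonal to get the exact recursion $p_{t+1}=p_t+s_t$; and (iii) check the two endpoint conditions $p_1=1$ and $q_N=1$ coming from $\epsilon$ and $\epsilon'$ being actual sides of the first and last triangles of the path. None of this is routine bookkeeping of the kind a pentagon/hexagon sanity check would substitute for; it is the real content of the proposition in your formulation, and as written the argument stops just short of it.
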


\begin{proof}
 We prove this proposition by induction on $j\in [n]$. For any triangulation $T'$, we denote by
 $a_j(T')<j<b_j(T')$ the indices of the vertices of $\Delta_j(T')$. Let $H$ be the group of  rotations
 in $\D_{n+2}$. It is well-known that for any reflection $s\in \D_{n+2}$, the classes $H$ and
$sH$ form a partition of $\D_{n+2}$ and that $|H|=n+2$.  We consider
also the unique reflection $s_k\in\D_{n+2}$  which maps $A_x$ to
$A_{n+3+k-x}$, where the values of the indices are taken in modulo
$n+2$. In particular, $s_k(A_0)=A_{n+3+k}=A_{k+1}$, $s_k(A_1)=A_k$,
$s_k(A_{k+1})=A_{n+2}=A_0$,  and so on.

\smallskip

\noindent {\bf Basic step $j=1$:}  We know  that $a_1(T')=0$ for any
triangulation $T'$, hence the weight of $\Delta_1(T')$ is
$\delta_1(T')=(1-0)(b_1(T')-1)=b_1(T')-1$.

The reflection $s_0\in \D_{n+2}$ maps $A_x$
to $A_{n+3-x}$ (where $A_{n+2}=A_0$ and $A_{n+3}=A_1$). In other
words, $s_0(A_0)=A_1$ and $s_0(\Delta_1(T'))$ is a triangle in
$s_0\cdot T'$. Since
$$
s_0(\Delta_1( T'))= s_0(A_0A_1A_{b_1(T')})= A_0A_1A_{n+3-b_1(T')}
$$
and $0<1<n+3-b_1(T')$, $s_0(\Delta_1(T'))$ has to be
$\Delta_1(s_0\cdot T')$. In consequence, we obtain that
$$
\delta_1(T')+\delta_1(s_0\cdot T')= (b_1(T')-1)+(n+3-b_1(T')-1)=n+1,
$$
for any triangulation $T'$. Therefore
$$
\sum_{f\in \D_{n+2}} \delta_1(f\cdot T) = \sum_{g\in H}\big(
(\delta_1(g\cdot T)+\delta_1(s_0\cdot (g\cdot T))\big)= |H|
(n+1)=(n+1)(n+2),
$$
proving the initial case of the induction.

\smallskip

\noindent {\bf Inductive step:} Assume that, for a given $1\leq j<n$, we
have
$$
\sum_{f\in \D_{n+2}}\delta_j(f\cdot T) = (n+1)(n+2).
$$
We will show that
$$
\sum_{f\in \D_{n+2}}\delta_{j+1}(f\cdot T) = \sum_{f\in
\D_{n+2}}\delta_j(f\cdot T).
$$
Let $r\in H\subseteq \D_{n+2}$ be the unique rotation mapping
$A_{j+1}$ to $A_{j}$. In particular, $r(A_0)=A_{n+1}$. Let $T'$ be a
triangulation of $P$. We have two cases:

\smallskip

\noindent {\bf Case 1.} If $a_{j+1}(T')>0$ then
$a_{j+1}(T')-1<j<b_{j+1}(T')-1$ are the indices of the vertices of
the triangle $r(\Delta_{j+1}(T'))$ in $r\cdot T'$. Therefore, by
unicity, $r(\Delta_{j+1}(T'))$ must be $\Delta_j(r\cdot T')$. Thus
 \begin{eqnarray*}
   \delta_{j+1}(T')&=&(b_{j+1}(T')-(j+1))(j+1-a_{j+1}(T'))\\
   &=&\big((b_{j+1}(T')-1)-j\big)(j-(a_{i+1}(T')-1))\\
   &=&\delta_j(r\cdot T').
\end{eqnarray*}
In other words:
\begin{eqnarray}\label{equ:1}
\sum_{{f\in \D_{n+2},\atop a_{j+1}(f\cdot T)\not =
0}}\delta_{j+1}(f\cdot T) & =& \sum_{{f\in \D_{n+2},\atop
a_{j+1}(f\cdot T)\not = 0}}\delta_j(r\cdot(f\cdot T))\\\nonumber &
=& \sum_{{g\in \D_{n+2},\atop b_{j}(g\cdot T)\not =
n+1}}\delta_j(g\cdot T).
\end{eqnarray}

\smallskip

\noindent {\bf Case 2.} If $a_{j+1}(T')=0$, then
$j<b_{j+1}(T')-1<n+1$ are the indices of the vertices of
$r(\Delta_{j+1}(T'))$, which is therefore not $\Delta_j(r\cdot T')$:
it is $\Delta_{b_{j+1}(T')-1}(r\cdot T')$. To handle this, we need
to use the reflections $s_j$ and $s_{j-2}$.

On one hand,  observe that $j+1<n+3+j-b_{j+1}(T')$ because
$b_{j+1}(T')<n+1$.
 Therefore
$$
s_j(\Delta_{j+1}(T'))=A_{j+1}A_0
A_{n+3+j-b_{j+1}(T')}=\Delta_{j+1}(s_j\cdot T').
$$
Hence
\begin{eqnarray*}
\delta_{j+1}(T')+\delta_{j+1}(s_j\cdot
T')&=&(j+1)(b_{j+1}(T')-(j+1))\\
&&+(j+1)(n+3+j-b_{j+1}(T')-(j+1))\\
&=&(j+1)(n+1-j).
\end{eqnarray*}

On the other hand, consider the triangle $\Delta_j(r\cdot T')$ in
$r\cdot T'$. Since
$$
r(\Delta_{j+1}(T'))=A_{j}A_{b_{j+1}(T')-1}A_{n+1}=\Delta_{b_{j+1}(T')-1}(r\cdot T')
$$
is in $r\cdot T'$, $[j,n+1]$ is a diagonal in $r\cdot T'$. Hence
$b_j(r\cdot T')=n+1$. Thus $\Delta_j(r\cdot T')=A_{a_j(r\cdot
T')}A_j A_{n+1}$ and  $\delta_j(r\cdot T')=(j-a_j(r\cdot
T'))(n+1-j)$. We have $s_{j-2}(A_j)=A_{n+1}$, $s_{j-2}(A_{n+2})=A_j$
and $s_{j-2}(A_{a_j(r\cdot T')})=A_{n+1+j-a_j(r\cdot
T')}=A_{j-a_j(r\cdot T')-1}$ since $a_j(r\cdot T')<j$. Therefore
$s_{j-2}(\Delta_j(r\cdot T'))=A_{j-a_j(r\cdot
T')-1}A_jA_{n+1}=\Delta_j(s_{j-2}r\cdot T')$ and
$\delta_j(s_{j-2}r\cdot T')=(a_j(r\cdot T')+1)(n+1-j)$.  Finally we
obtain that
 \begin{eqnarray*}
 \delta_{j}(r\cdot T')+\delta_{j}(s_{j-2}r\cdot
T')&=&(j-a_j(r\cdot T'))(n+1-j)+(a_j(r\cdot
T')+1)(n+1-j)\\
&=&(j+1)(n+1-j).
 \end{eqnarray*}

Since  $\{H,s_k H\}$ forms a partition of $\D_{n+2}$ for any $k$,
we have
\begin{eqnarray}\label{equ:2}
 \sum_{{f\in \D_{n+2},\atop a_{j+1}(f\cdot T)=0}}\delta_{j+1}(f\cdot T) & =& \sum_{{f\in H,\atop a_{j+1}(f\cdot T)=0}}\big(\delta_{j+1}(f\cdot T) +\delta_{j+1}(s_j f\cdot T)\big)\\ \nonumber
 &=& \sum_{{f\in H,\atop  a_{j+1}(f\cdot T)=0}} (j+1)(n+1-j)\\ \nonumber
 &=& \sum_{{rf\in H,\atop  b_{j}(rf\cdot T)=n+1}}\big(\delta_{j}(rf\cdot T) +\delta_{j}(s_{j-2} rf\cdot T)\big),\ \textrm{since }r\in H\\ \nonumber
 &=& \sum_{{g\in H,\atop  b_{j}(g\cdot T)=n+1}}\delta_{j}(g\cdot T).
\end{eqnarray}

\smallskip

\noindent We conclude the induction by adding
Equations~(\ref{equ:1}) and (\ref{equ:2}).
\end{proof}

\begin{proof}[Proof of Theorem~\ref{thm:key}] We have to prove that
$$
\vect u=\sum_{T'\in\mathcal O(T)} \vect{GM(T')}=\vect 0.
$$
Denote by $\Stab(T')=\{f\in\D_{n+2}\,|\, f\cdot T'=T'\}$ the stabilizer of $T'$, then
$$
\sum_{f\in \D_{n+2}} M(f\cdot T) = \sum_{T'\in \mathcal O(T)} |\Stab(T')| M(T').
$$
Since $T'\in\mathcal O(T)$, $|\Stab(T')|=|\Stab(T)|=\frac{2(n+2)}{|\mathcal O(T)|}$, we have
$$
\sum_{f\in \D_{n+2}} M(f\cdot T) = \frac{2(n+2)}{|\mathcal O(T)|} \sum_{T'\in \mathcal O(T)} M(T') .
$$
Therefore by Proposition~\ref{prop:canonique} we have for any $i\in [n]$
\begin{equation}\label{equ:3}
\sum_{T'\in \mathcal O(T)} \delta_i(T')= \frac{|\mathcal
O(T)|}{2(n+2)}(n+1)(n+2)=\frac{|\mathcal O(T)|(n+1)}{2}.
\end{equation}

Denote by $O$ the point of origin of $\mathbb R^n$. Then
$\vect{OM}=M$ for any point $M$ of $\mathbb R^n$. By Chasles'
relation we have finally
 $$
\vect u=\sum_{T'\in\mathcal O(T)} \vect{GM(T')}= \sum_{T'\in\mathcal O(T)} (M(T')-G) =\sum_{T'\in\mathcal O(T)} M(T') - |\mathcal O(T)| G.
$$
So the $i^{th}$ coordinate of $\vect u$ is $ \sum_{T'\in \mathcal
O(T)} \delta_i(T')- \frac{|\mathcal O(T)|(n+1)}{2}=0 $, hence $\vect
u =\vect 0$ by (\ref{equ:3}).
\end{proof}

\section{Center of gravity of generalized associahedra of type $A$ and $B$}\label{se:2}

\subsection{Realizations of associahedra} As a Coxeter group (of type $A$), $S_n$ is generated by the simple
transpositions $\tau_i=(i,\, i+1)$, $i\in [n-1]$. The Coxeter graph
$\Gamma_{n-1}$ is then
\begin{figure}[h]
      \psfrag{t1}{$\tau_{1}$}
      \psfrag{t2}{$\tau_{2}$}
      \psfrag{t3}{$\tau_{3}$}
      \psfrag{tn}{$\tau_{n-1}$}
      \psfrag{dots}{$\ldots$}
      \begin{center}
            \includegraphics[width=8cm]{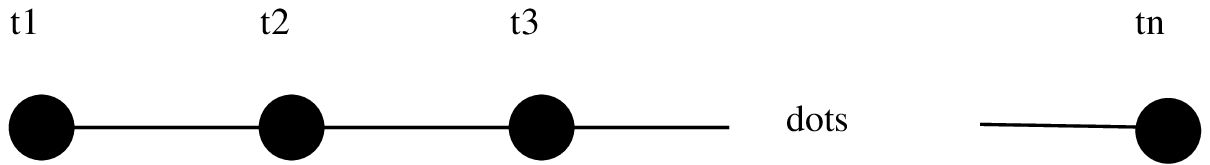}
      \end{center}
\end{figure}

Let $\ADG$ be an orientation of $\Gamma_{n-1}$. We distinguish
between {\em up} and {\em down} elements of $[n]$~: an element $i\in
[n]$ is {\em up} if the edge $\{\tau_{i-1}, \tau_i\}$ is directed
from $\tau_i$ to $\tau_{i-1}$ and {\em down} otherwise (we set $1$
and $n$ to be down). Let $\Do_\ADG$ be the set of down elements and
let $\Up_\ADG$ be the set of up elements (possibly empty).

The notion of up and down induces a labeling of the $(n+2)$-gon $P$
as follows. Label $A_0$ by $0$. Then the vertices of $P$ are, in
counterclockwise direction,  labeled by the down elements in
increasing order, then by $n+1$, and finally by the up elements in
decreasing order. An example is given in
Figure~\ref{fig:example_labelling}.
\begin{figure}[h]
      \psfrag{0}{$0$}
      \psfrag{1}{$1$}
      \psfrag{2}{$2$}
      \psfrag{3}{$3$}
      \psfrag{4}{$4$}
      \psfrag{5}{$5$}
      \psfrag{6}{$6$}
      \psfrag{s1}{$\tau_{1}$}
      \psfrag{s2}{$\tau_{2}$}
      \psfrag{s3}{$\tau_{3}$}
      \psfrag{s4}{$\tau_{4}$}
      \begin{center}
      \begin{minipage}{0.95\linewidth}
         \begin{center}
            \includegraphics[height=6cm]{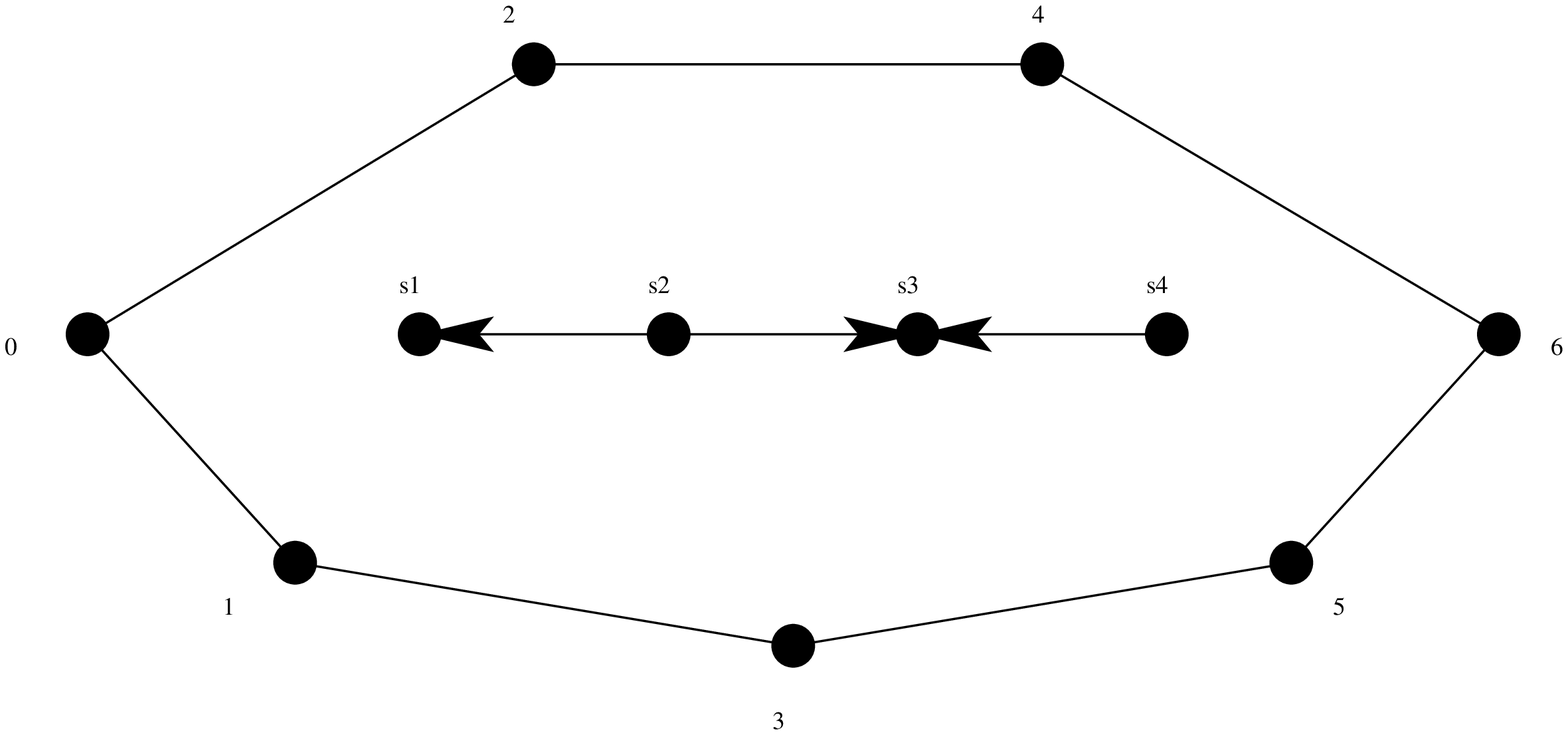}
         \end{center}
         \caption[]{A labeling of a heptagon that corresponds to
                    the orientation $\ADG$ of $\Gamma_{4}$ shown inside
                    the heptagon. We have $\Do_{\ADG} = \{1, 3, 5 \}$ and
                     $\Up_{\ADG} = \{ 2,4\}$.}
         \label{fig:example_labelling}
      \end{minipage}
      \end{center}
\end{figure}

We recall here a construction due to Hohlweg and
Lange~\cite{realisation1}. Consider  $P$ labeled according to a
fixed orientation~$\ADG$ of $\Gamma_{n-1}$. For each $l\in [n]$ and
any triangulation $T$ of $P$, there is a unique triangle
$\Delta^\ADG_l(T)$ whose vertices are labeled by $k<l<m$.  Now,
count the number of edges of $P$ between $i$ and $k$, whose vertices
are labeled by smaller numbers than $l$. Then multiply it by the
number of edges of $P$ between $l$ and $m$, whose vertices are
labeled by greater numbers than $l$. The result $\omega_l^\ADG(T)$
is called the {\em weight} of $\Delta_l^\ADG(T)$. The injective map
\begin{align*}
   M_{\ADG}: \SOT_{n+2} &\longrightarrow  \R^n \\
                   T          &\longmapsto      (x^\ADG_1(T),x^\ADG_2(T),\dots,x^\ADG_n(T))
\end{align*}
that assigns explicit coordinates to a triangulation is defined as follows:
\[
  x^\ADG_j(T) := \begin{cases}
            \omega_j^\ADG  (T)   & \textrm{if } j\in\Do_\ADG\\
            n+1-\omega_j^\ADG(T) & \textrm{if } j\in\Up_\ADG.
         \end{cases}
\]

Hohlweg and Lange showed that the convex hull $\Ass_\ADG(S_n)$
of~$\{M_{\ADG}(T)\,|\,T\in \SOT_{n+2}\}$ is a realization of the
associahedron with integer coordinates \cite[Theorem
1.1]{realisation1}. Observe that if the orientation $\ADG$ is {\em
canonic}, that is, if $\Up_\ADG=\emptyset$, then
$\Ass_\ADG(S_n)=\Ass(S_n)$.

The key is now to observe that the weight of $\Delta_{j}^\ADG(T)$ in
$T$ is precisely the weight of $\Delta_j (T')$ where $T'$ is a
triangulation in the orbit of $T$ under the action of $\D_{n+2}$, as
stated in the next proposition.

\begin{prop}\label{prop:weight} Let $\ADG$ be an orientation of $\Gamma_{n-1}$. Let $j\in [n]$
and let $A_l$ be the vertex of $P$ labeled by $j$. There is an
isometry $r_j^\ADG\in \mathcal D_{n+2}$ such that:
\begin{enumerate}
\item[(i)] $r_j^\ADG(A_l)=A_j$;

\item[(ii)] the label of the vertex $A_k$ is smaller than $j$ if and
only if the index $i$ of the vertex $A_i=r_j^\ADG(A_k)$ is smaller
than $j$.

\end{enumerate}
Moreover, for any triangulation $T$ of $P$ we have
$\omega_j^\ADG(T)=\delta_j(r_j^\ADG\cdot T).$
\end{prop}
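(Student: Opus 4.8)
The plan is to extract a single structural feature of the labeling and let it do all the work. Write $\lambda(l)$ for the label of the geometric vertex $A_l$, so that $\lambda$ is a bijection of $\{0,1,\dots,n+1\}$ fixing $0$; the vertex labeled $j$ is then $A_l$ with $l=\lambda^{-1}(j)$. The first step is to prove a \emph{threshold lemma}: for every $j\in[n]$ the set of vertices carrying a label in $\{0,1,\dots,j\}$ is a single contiguous arc of $P$ containing $A_0$, and the vertex labeled $j$ is one of its two endpoints. This reads off directly from the recipe defining the labeling. Indeed $A_0$ carries $0$; the down elements are written in increasing order along the positions $1,2,\dots,|\Do_\ADG|$; and the up elements in decreasing order along the positions $|\Do_\ADG|+2,\dots,n+1$, which terminate at $A_{n+1}$, adjacent to $A_0$. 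Thresholding at $j$ therefore keeps an initial block of the down side and a terminal block of the up side, and these, together with $A_0$, form one arc. Its two endpoints carry the largest down element $\le j$ and the largest up element $\le j$; since $j$ itself is one of these, the vertex labeled $j$ is an endpoint of the arc.

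The second step is to define $r_j^\ADG$ as the unique element of $\D_{n+2}$ carrying the threshold arc onto the arc $\{A_0,A_1,\dots,A_j\}$ so that the vertex labeled $j$ is sent to $A_j$. This is well defined: both arcs have exactly $j+1$ vertices, and matching the distinguished endpoint (labeled $j$) with the endpoint $A_j$ fixes a flag, hence a unique dihedral isometry, which then automatically carries one arc onto the other because their lengths agree (one checks it is a rotation when $j\in\Do_\ADG$ and a reflection when $j\in\Up_\ADG$). Property (i) holds by construction. For (ii), the map sends the labels $\le j$ onto the positions $\le j$ with $j\mapsto j$, so the labels $<j$ land exactly on the positions $<j$; taking complements, the labels $>j$ land on the positions $>j$, which is precisely the assertion of (ii).

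For the weight identity, let $\Delta_j^\ADG(T)$ have vertices labeled $k'<j<m'$. By (i) and (ii) its image under $r_j^\ADG$ has vertices $A_i,A_j,A_k$ with $i<j<k$, and by the uniqueness built into the definition of $\delta$ this image is exactly $\Delta_j(r_j^\ADG\cdot T)$. The first factor of $\omega_j^\ADG(T)$ counts the boundary edges joining the vertices labeled $k'$ and $j$ along the side passing only through labels $<j$; this side is a subarc of the threshold arc, so $r_j^\ADG$ carries it to the subarc of $\{A_0,\dots,A_j\}$ joining $A_i$ to $A_j$, namely $A_i,A_{i+1},\dots,A_j$, which has $j-i$ edges. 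As isometries preserve the number of boundary edges, this factor equals $j-i$; the symmetric argument on the complementary arc (labels $>j$, mapped into positions $>j$) gives $k-j$ for the second factor. Hence $\omega_j^\ADG(T)=(j-i)(k-j)=\delta_j(r_j^\ADG\cdot T)$.

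The real content, and the step I expect to be most delicate, is the threshold lemma together with the bookkeeping that the \emph{small-label} side of $\Delta_j^\ADG(T)$ is carried precisely onto the \emph{small-index} side of its image, and the large side onto the large side; once the arc structure is established, everything else is formal. I would also be careful to confirm that $r_j^\ADG$ is a genuine dihedral element (a flag map) rather than merely a length-preserving bijection between the two arcs, and to track the rotation/reflection dichotomy so that the intended endpoint correspondence, not its reverse, is the one realized.
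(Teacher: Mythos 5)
Your proposal is correct and follows essentially the same route as the paper: your ``threshold lemma'' is exactly the paper's observation that the vertices labeled by numbers at most $j$ form the contiguous path $\Lambda_\ADG$ of cardinality $j+1$ ending at the vertex labeled $j$, and your $r_j^\ADG$ (rotation for $j\in\Do_\ADG$, reflection for $j\in\Up_\ADG$) carrying this arc onto $\{A_0,\dots,A_j\}$ is the same isometry the paper constructs explicitly, with the same concluding edge-count argument for $\omega_j^\ADG(T)=\delta_j(r_j^\ADG\cdot T)$.
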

\begin{proof} If $\ADG$ is the canonical orientation, then $r_j^\ADG$ is
the identity, and the proposition is straightforward. In the
following proof, we suppose therefore that $\Up_\ADG\not=\emptyset$.

\smallskip

\noindent Case 1: Assume that $j\in\Do_\ADG$. Let $\alpha$ be the
greatest up element smaller than $j$ and let $A_{\alpha+1}$ be the
vertex of $P$ labeled by $\alpha$. Then by construction of the
labeling, $A_{\alpha}$ is labeled by a larger number than $j$, and
$[A_{\alpha},A_{\alpha+1}]$ is the
 unique edge  of $P$ such that $A_{\alpha+1}$ is labeled by a
smaller number than $j$. Denote by $\Lambda_\ADG$ the path from
$A_l$ to $A_{\alpha+1}$ passing through vertices of $P$ labeled by
smaller numbers than $j$. This is the path going from $A_l$ to
$A_{\alpha+1}$ in clockwise direction on the boundary of $P$.

By construction, $A_k\in \Lambda_\ADG$ if and only if the label of
$A_k$ is smaller than $j$. In other words, the path $\Lambda_\ADG$
consists of {\em all} vertices of $P$ labeled by smaller numbers
than $j$. Therefore the cardinality of $\Lambda_\ADG$ is $j+1$.

Consider $r_j^\ADG$ to be the rotation mapping $A_l$ to $A_j$.
Recall that a rotation is an isometry preserving the orientation of
the plane. Then the path $\Lambda_\ADG$, which is obtained by
walking on the boundary of $P$ from $A_l$ to $A_{\alpha+1}$ in
clockwise direction, is sent to the path $\Lambda$ obtained by
walking on the boundary of $P$ in clockwise direction from $A_j$ and
going through $j+1=|\Lambda_\ADG|$ vertices of $P$. Therefore
$\Lambda=\{A_0,A_1,\dots, A_j\}$, thus proving the first claim of
our proposition in this case.

\smallskip

\noindent Case 2: assume that $j\in \Up_\ADG$. The proof is almost
the same as in the case of a down element. Let $\alpha$ be the
greatest down element smaller than $j$ and let $A_{\alpha}$ be the
vertex of $P$ labeled by $\alpha$. Then by construction of the
labeling, $A_{\alpha+1}$ is labeled by a larger number than $j$, and
$[A_{\alpha},A_{\alpha+1}]$ is the unique edge  of $P$ such that
$A_{\alpha}$ is labeled by a smaller number than $j$. Denote by
$\Lambda_\ADG$ the path from $A_l$ to $A_{\alpha}$ passing through
vertices of $P$ labeled by smaller numbers than $j$. This is the
path going from  $A_{\alpha}$ to $A_l$
 in clockwise direction on the boundary of $P$.

As above, $A_k\in \Lambda_\ADG$ if and only if the label of $A_k$ is
smaller than $j$. In other words, the path $\Lambda_\ADG$ consists
of all the vertices of $P$ labeled by smaller numbers than $j$.
 Therefore, again,  the cardinality of $\Lambda_\ADG$ is $j+1$.

Let $r_j^\ADG$ be the reflection mapping $A_\alpha$ to $A_0$ and
$A_{\alpha+1}$ to $A_{n+1}$. Recall that a reflection is an isometry
reversing the orientation of the plane. Then the path
$\Lambda_\ADG$, which is obtained by walking on the boundary of $P$
from $A_\alpha$ to $A_{l}$ in clockwise direction, is sent to the
path $\Lambda$ obtained by walking on the boundary of $P$ in
clockwise direction from $A_\alpha$ and going through
$j+1=|\Lambda_\ADG|$ vertices of $P$. Therefore
$\Lambda=\{A_0,A_1,\dots, A_j\}$. Hence $r_j^\ADG(A_l)$ is sent on
the final vertex of the path $\Lambda$ which is $A_j$,  proving the
first claim of our proposition.

\smallskip

Thus it remains to show that for a triangulation $T$ of $P$ we have
$\omega_j^\ADG(T)=\delta_j(r_j^\ADG\cdot T).$ We know that
$\Delta_j^\ADG(T)=A_k A_l A_m$ such that the label of $A_k$ is
smaller than $j$, which is smaller than the label of $A_m$. Write
 $A_a=r_j^\ADG(A_k)$ and $A_b=r_j^\ADG(A_m)$. Because of Proposition~\ref{prop:weight}, $a<j<b$ and therefore
$$
r_j^\ADG(\Delta_j^\ADG(T))= A_a A_jA_b=\Delta_j(r_j^\ADG\cdot T).
$$
So $(j-a)$ is the number of edges of $P$ between $A_l$ and $A_k$,
whose vertices are labeled by smaller numbers than $j$. Similarly,
$(b-j)$ is the number of edges between $A_l$ and $A_m$, whose
vertices  are labeled by smaller numbers than $j$, and $(b-j)$ is
the number of edges of $P$ between $A_l$ and $A_m$ and whose
vertices are labeled by larger numbers than $j$. So
$\omega_l^\ADG(T)=(j-a)(b-j)=\delta_j(r_j^\ADG\cdot T)$.
\end{proof}

\begin{cor}\label{cor:Canon} For any orientation $\ADG$ of the Coxeter graph of $S_n$ and for any
$j\in [n]$, we have
$$
\sum_{f\in \D_{n+2}} x^\ADG_j(f\cdot T) = (n+1)(n+2).
$$
\end{cor}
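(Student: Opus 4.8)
The plan is to reduce this statement directly to Proposition~\ref{prop:canonique} by way of Proposition~\ref{prop:weight}, splitting the computation according to the two branches in the definition of $x_j^\ADG$. First I would treat the case $j\in\Do_\ADG$, where $x_j^\ADG(T)=\omega_j^\ADG(T)$. Using Proposition~\ref{prop:weight} to rewrite $\omega_j^\ADG(f\cdot T)=\delta_j(r_j^\ADG\cdot(f\cdot T))$, and the associativity of the $\D_{n+2}$-action, so that $r_j^\ADG\cdot(f\cdot T)=(r_j^\ADG f)\cdot T$, the sum becomes $\sum_{f\in\D_{n+2}}\delta_j\big((r_j^\ADG f)\cdot T\big)$.

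The key observation is then purely group-theoretic: left multiplication by the fixed element $r_j^\ADG$ permutes $\D_{n+2}$ bijectively, so the substitution $g=r_j^\ADG f$ turns this into $\sum_{g\in\D_{n+2}}\delta_j(g\cdot T)$, which is exactly $(n+1)(n+2)$ by Proposition~\ref{prop:canonique}. This settles the down case.

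For the case $j\in\Up_\ADG$, where $x_j^\ADG(T)=n+1-\omega_j^\ADG(T)$, I would split the sum additively:
$$
\sum_{f\in\D_{n+2}}x_j^\ADG(f\cdot T)=\sum_{f\in\D_{n+2}}(n+1)-\sum_{f\in\D_{n+2}}\omega_j^\ADG(f\cdot T).
$$
The first sum equals $|\D_{n+2}|(n+1)=2(n+2)(n+1)$, and the second equals $(n+1)(n+2)$ by precisely the re-indexing argument of the down case (Proposition~\ref{prop:weight} furnishes $r_j^\ADG$ for every $j$, up or down). Subtracting gives $2(n+2)(n+1)-(n+1)(n+2)=(n+1)(n+2)$, matching the down case, so the two branches produce the same value.

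There is essentially no genuine obstacle here: the corollary is a bookkeeping consequence of the two preceding propositions combined with the elementary fact that left translation is a bijection of the group. If anything merits care, it is only ensuring that the bijective re-indexing is applied correctly and that the order $|\D_{n+2}|=2(n+2)$ is inserted in the up case; both are routine.
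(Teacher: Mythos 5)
Your proposal is correct and follows essentially the same route as the paper: apply Proposition~\ref{prop:weight} to convert $\omega_j^\ADG(f\cdot T)$ into $\delta_j$ of a translated triangulation, re-index the sum over $\D_{n+2}$ using the bijectivity of translation by $r_j^\ADG$, and invoke Proposition~\ref{prop:canonique}, handling the up case by subtracting from $2(n+2)(n+1)$. The only difference is presentational (the paper works out the up case explicitly and leaves the down case as ``similar,'' while you do the reverse), and your writing of the composed action as $(r_j^\ADG f)\cdot T$ is if anything cleaner than the paper's.
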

\begin{proof} Let $r_j^\ADG\in \mathcal D_{n+2}$ be as in Proposition~\ref{prop:weight}.

Suppose first that $j\in \Up_\ADG$, then
\begin{eqnarray*}
\sum_{f\in \D_{n+2}} x^\ADG_i(f\cdot T) &=&2(n+2)(n+1)-\sum_{f\in \D_{n+2}} \omega_i^\ADG(f\cdot T)\\
&=&2(n+2)(n+1)-\sum_{f\in \D_{n+2}} \delta_j(fr_j^\ADG\cdot T),\ \textrm{by Proposition~\ref{prop:weight}} \\
&=&2(n+2)(n+1)-\sum_{g\in \D_{n+2}} \delta_j(g^\ADG\cdot T),\ \textrm{since $r_j^\ADG\in\mathcal D_{n+2}$} \\
&=& (n+1)(n+2),\ \textrm{by Proposition~\ref{prop:canonique}}
\end{eqnarray*}
 If $i\in \Do_\ADG$,  the result follows from a similar calculation.
\end{proof}

\subsection{Center of gravity of associahedra}

\begin{thm}\label{thm:Main2} The center of gravity of $\Ass_\ADG(S_n)$ is $G=(\frac{n+1}{2},\frac{n+1}{2},\dots,\frac{n+1}{2})$ for any orientation $\ADG$.
 \end{thm}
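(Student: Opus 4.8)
The plan is to reproduce verbatim the orbit-averaging argument used in the proof of Theorem~\ref{thm:key}, replacing the coordinate functions $\delta_j$ by the functions $x_j^\ADG$ and Proposition~\ref{prop:canonique} by Corollary~\ref{cor:Canon}. As in that proof, it suffices to show that
$$
\sum_{T\in\SOT_{n+2}} \vect{GM_\ADG(T)} = \vect 0,
$$
and since the orbits of $\D_{n+2}$ partition $\SOT_{n+2}$, it is enough to check that $\sum_{T'\in\mathcal O(T)} \vect{GM_\ADG(T')}=\vect 0$ for every orbit $\mathcal O(T)$.

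First I would fix an orbit $\mathcal O(T)$ and observe that the stabilizer-counting step in the proof of Theorem~\ref{thm:key} is a statement purely about the action of $\D_{n+2}$ on $\SOT_{n+2}$: as $f$ ranges over $\D_{n+2}$, the element $f\cdot T$ hits each $T'\in\mathcal O(T)$ exactly $|\Stab(T)|$ times, independently of which map is used to assign coordinates. Hence, applying $M_\ADG$ termwise and using $|\Stab(T')|=|\Stab(T)|=\frac{2(n+2)}{|\mathcal O(T)|}$, the identity
$$
\sum_{f\in\D_{n+2}} M_\ADG(f\cdot T) = \frac{2(n+2)}{|\mathcal O(T)|}\sum_{T'\in\mathcal O(T)} M_\ADG(T')
$$
carries over unchanged.

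Next, reading this identity coordinate by coordinate and invoking Corollary~\ref{cor:Canon} in place of Proposition~\ref{prop:canonique}, I would obtain for each $i\in[n]$ the analogue of Equation~(\ref{equ:3}), namely
$$
\sum_{T'\in\mathcal O(T)} x_i^\ADG(T') = \frac{|\mathcal O(T)|}{2(n+2)}(n+1)(n+2) = \frac{|\mathcal O(T)|(n+1)}{2}.
$$
Finally, Chasles' relation shows that the $i$-th coordinate of $\sum_{T'\in\mathcal O(T)} \vect{GM_\ADG(T')}$ equals $\sum_{T'\in\mathcal O(T)} x_i^\ADG(T') - |\mathcal O(T)|\frac{n+1}{2} = 0$, so the orbit sum vanishes; summing over all orbits then yields the claim.

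I expect no genuine obstacle here, since the only nontrivial computation, the dihedral average of the coordinate functions, has already been performed once for $\delta_j$ in Proposition~\ref{prop:canonique} and transported to $x_j^\ADG$ in Corollary~\ref{cor:Canon}. The single point deserving care is the explicit remark that the counting identity above is independent of the embedding, so that the proof of Theorem~\ref{thm:key} applies word for word after substituting $M_\ADG$ for $M$; in particular the resulting center of gravity $G$ does not depend on the orientation $\ADG$.
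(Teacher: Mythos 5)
Your proposal is correct and is exactly the paper's own argument: the paper reduces Theorem~\ref{thm:Main2} to the orbit statement (Theorem~\ref{thm:keyGenAss}) and proves it by repeating the proof of Theorem~\ref{thm:key} verbatim with Corollary~\ref{cor:Canon} in place of Proposition~\ref{prop:canonique}. Your explicit note that the stabilizer-counting identity is independent of the coordinate map is the right point to flag, and nothing is missing.
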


By following precisely the same arguments as in
\S\ref{se:centergravity}, we just have to show the following
generalization of Theorem~\ref{thm:key}.

\begin{thm}\label{thm:keyGenAss} Let $\mathcal O$ be an orbit of the action of $\D_{n+2}$ on $\SOT_{n+2}$, then $G$ is the center of gravity of $\{M_\ADG(T)\,|\, T\in\mathcal O\}$. In particular, $\sum_{T\in\mathcal O} \vect{GM_\ADG(T)}=\vect 0. $
\end{thm}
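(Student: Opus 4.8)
The plan is to follow verbatim the proof of Theorem~\ref{thm:key}, simply replacing the coordinate map $M$ by $M_\ADG$, the weights $\delta_i$ by the coordinates $x^\ADG_i$, and the appeal to Proposition~\ref{prop:canonique} by its generalization Corollary~\ref{cor:Canon}. The point is that the argument of \S\ref{se:centergravity} rests on only two ingredients, neither of which breaks under the deformation by $\ADG$. The first is the purely group-theoretic fact that the stabilizer $\Stab(T')$ has constant size $\frac{2(n+2)}{|\mathcal O(T)|}$ as $T'$ ranges over the orbit $\mathcal O(T)$; this depends on the $\D_{n+2}$-action on $\SOT_{n+2}$ alone and is oblivious to $\ADG$. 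The second is the arithmetic identity that each coordinate, summed over a full copy of $\D_{n+2}$, equals $(n+1)(n+2)$ --- and this is exactly what Corollary~\ref{cor:Canon} provides for the new coordinates $x^\ADG_i$.

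Concretely, I would fix a representative $T$ with $\mathcal O=\mathcal O(T)$ and first record that summing $M_\ADG(f\cdot T)$ over all $f\in\D_{n+2}$ visits each orbit vertex $|\Stab(T)|$ times, so that
$$
\sum_{f\in \D_{n+2}} M_\ADG(f\cdot T) = \frac{2(n+2)}{|\mathcal O(T)|} \sum_{T'\in \mathcal O(T)} M_\ADG(T').
$$
Comparing $i$-th coordinates and inserting Corollary~\ref{cor:Canon} on the left then gives, for every $i\in[n]$,
$$
\sum_{T'\in \mathcal O(T)} x^\ADG_i(T') = \frac{|\mathcal O(T)|}{2(n+2)}(n+1)(n+2) = \frac{|\mathcal O(T)|(n+1)}{2},
$$
which is precisely the analogue of Equation~(\ref{equ:3}). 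To finish, I would set $\vect u=\sum_{T'\in\mathcal O(T)}\vect{GM_\ADG(T')}$ and apply Chasles' relation exactly as before; its $i$-th coordinate equals $\sum_{T'\in\mathcal O(T)} x^\ADG_i(T') - \frac{|\mathcal O(T)|(n+1)}{2}$, which vanishes, so $\vect u=\vect 0$ and $G$ is the center of gravity of the orbit.

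I do not anticipate a genuine obstacle here: all the real work has been front-loaded into Proposition~\ref{prop:weight} and Corollary~\ref{cor:Canon}, whose entire purpose is to certify that the orientation-dependent coordinates $x^\ADG_i$ behave, after summing over $\D_{n+2}$, exactly like the classical weights $\delta_i$. The one spot deserving a sentence of care is the stabilizer bookkeeping, but it is verbatim the classical computation and is independent of $\ADG$; once it is in place, the orbit-by-orbit vanishing --- and hence Theorem~\ref{thm:Main2} via the partition of $\SOT_{n+2}$ into $\D_{n+2}$-orbits --- follows mechanically.
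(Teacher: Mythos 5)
Your proposal is correct and is exactly the paper's argument: the authors also prove Theorem~\ref{thm:keyGenAss} by repeating the proof of Theorem~\ref{thm:key} verbatim with Corollary~\ref{cor:Canon} in place of Proposition~\ref{prop:canonique}. Your write-up just spells out the details that the paper leaves as ``entirely similar.''
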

\begin{proof} The proof is entirely similar to the proof of Theorem~\ref{thm:key}, using Corollary~\ref{cor:Canon} instead of Proposition~\ref{prop:canonique}.
\end{proof}

\section{Center of gravity of the cyclohedron}\label{se:3}

\subsection{The type $B$-permutahedron}
The hyperoctahedral  group  $W_n$ is defined by $W_n=\{\sigma\in S_{2n}\,|\, \sigma(i)+\sigma(2n+1-i)=2n+1,\ \forall i\in[n]\}$. The {\em type $B$-permutahedron} $\Perm(W_n)$ is the simple $n$-dimensional convex polytope defined as the convex hull of the points
$$
M(\sigma)=(\sigma(1),\sigma(2),\dots, \sigma (n))\in\mathbb R^{2n},\qquad \forall \sigma\in W_n.
$$
As $w_0=(2n,2n-1,\dots,3,2,1)\in W_n$, we deduce from the same
argument as in the case of $\Perm(S_n)$ that the center of gravity
of $\Perm(W_n)$ is
$$G=(\frac{2n+1}{2},\frac{2n+1}{2},\dots,\frac{2n+1}{2}).$$

\subsection{Realizations of the associahedron}
An orientation~$\ADG$ of~$\Gamma_{2n-1}$ is {\em symmetric} if the
edges $\{\tau_i,\tau_{i+1}\}$ and $\{\tau_{2n-i-1},\tau_{2n-i}\}$
are oriented in \emph{opposite directions} for all~$i\in [2n-2]$.
There is a bijection between symmetric orientations
of~$\Gamma_{2n-1}$ and orientations of the Coxeter graph of
$W_n$ (see \cite[\S1.2]{realisation1}). A triangulation $T\in \SOT_{2n+2}$ is {\em centrally
symmetric} if~$T$, viewed as a triangulation  of $P$, is centrally
symmetric. Let $\SOT_{2n+2}^B$ be the set of the centrally symmetric
triangulations of $P$. In \cite[Theorem
1.5]{realisation1} the authors show that for any symmetric
orientation $\ADG$ of $\Gamma_{2n-1}$. The convex hull
$\Ass_\ADG(W_{n})$ of $\{M_{\ADG}(T)\,|\,T\in \SOT^B_{2n+2}\}$ is a
realization of the cyclohedron with integer coordinates.

   Since the full orbit of symmetric triangulations under the action of $\D_{2n+2}$ on triangulations provides vertices of $\Ass_\ADG(W_{n})$, and vice-versa, Theorem~\ref{thm:keyGenAss} implies the following corollary.

 \begin{cor}\label{cor:Main} Let $\ADG$ be a symmetric orientation of $\Gamma_{2n-1}$, then the center of gravity of $\Ass_\ADG(W_n)$ is $G=(\frac{2n+1}{2},\frac{2n+1}{2},\dots,\frac{2n+1}{2})$.
 \end{cor}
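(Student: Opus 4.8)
The plan is to deduce the corollary directly from Theorem~\ref{thm:keyGenAss} by observing that the index set of the vertices of $\Ass_\ADG(W_n)$ is a union of complete $\D_{2n+2}$-orbits. First I would recall that, by definition of the center of gravity, $G$ is the center of gravity of $\Ass_\ADG(W_n)$ precisely when
$$
\sum_{T\in\SOT^B_{2n+2}} \vect{GM_\ADG(T)}=\vect 0,
$$
since the vertices of $\Ass_\ADG(W_n)$ are exactly the points $M_\ADG(T)$ for $T$ ranging over the centrally symmetric triangulations $\SOT^B_{2n+2}$ and $M_\ADG$ is injective. Here $G=(\frac{2n+1}{2},\dots,\frac{2n+1}{2})$ is the point furnished by Theorem~\ref{thm:keyGenAss} applied to the $(2n+2)$-gon, that is, with the role of $n$ played by $2n$.

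The crucial step is to check that $\SOT^B_{2n+2}$ is stable under the action of the full dihedral group $\D_{2n+2}$. A triangulation $T$ is centrally symmetric exactly when $c\cdot T=T$, where $c\in\D_{2n+2}$ denotes the half-turn, i.e.\ the rotation by $\pi$ (equivalently $r^{n+1}$ for $r$ a generating rotation of the $(2n+2)$-gon). Because $2n+2$ is even, $c$ is central in $\D_{2n+2}$: it commutes with every rotation and, being an involution, it is fixed under conjugation by any reflection. Hence for any $f\in\D_{2n+2}$ and any $T\in\SOT^B_{2n+2}$ we have $c\cdot(f\cdot T)=f\cdot(c\cdot T)=f\cdot T$, so $f\cdot T$ is again centrally symmetric. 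This is exactly the observation that the full orbit of a symmetric triangulation consists of symmetric triangulations, and it shows that $\SOT^B_{2n+2}$ is a disjoint union of $\D_{2n+2}$-orbits.

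With this in hand the computation is immediate. Partitioning $\SOT^B_{2n+2}$ into $\D_{2n+2}$-orbits $\mathcal O$ and applying Theorem~\ref{thm:keyGenAss} to each one gives $\sum_{T\in\mathcal O}\vect{GM_\ADG(T)}=\vect 0$; summing over all orbits contained in $\SOT^B_{2n+2}$ then yields $\sum_{T\in\SOT^B_{2n+2}}\vect{GM_\ADG(T)}=\vect 0$, which is what we wanted.

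I do not expect a genuinely hard step here, since the entire analytic weight of the argument has already been discharged by Theorem~\ref{thm:keyGenAss}. The only point requiring care is the stability of $\SOT^B_{2n+2}$ under $\D_{2n+2}$, and this rests solely on the centrality of the half-turn in a dihedral group of even order. One should simply confirm that the half-turn is indeed the isometry realizing central symmetry of the $(2n+2)$-gon and that it lies in $\D_{2n+2}$; after that, the orbit decomposition and the final summation are routine.
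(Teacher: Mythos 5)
Your proof is correct and follows essentially the same route as the paper: the paper likewise deduces the corollary from Theorem~\ref{thm:keyGenAss} by observing that $\SOT^B_{2n+2}$ is a union of full $\D_{2n+2}$-orbits, though it states this stability without proof, whereas you justify it via the centrality of the half-turn in a dihedral group of even order. No gaps.
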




\begin{thebibliography}{99}


\bibitem{bott_taubes}
{\sc R.~Bott and C.~Taubes}, {\em On the self-linking of knots}, J.
Math. Phys. {\bf 35} (1994), 5247--5287.

\bibitem{chapoton_fomin_zelevinsky}
{\sc F.~Chapoton, S.~Fomin and A.~Zelevinsky}, {\em Polytopal
realizations of generalized associahedra}, Canad. Math. Bull. {\bf
45} (2003), 537--566.

\bibitem{fomin_zelevinsky}
{\sc S.~Fomin and A.~Zelevinsky}, {\em Y-systems and generalized
associahedra},  Annals of Mathematics {\bf 158} (2003), 977--1018.

\bibitem{realisation1}
{\sc  C.~Hohlweg and C.~Lange},
  {\em Realizations of the Associahedron and Cyclohedron}, Discrete Comput Geom {\bf 37} (2007), 517--543.

\bibitem{realisation2}
  {\sc  C.~Hohlweg, C.~Lange, H.~Thomas},
  {\em Permutahedra and Generalized Associahedra}, \url{arXiv:math.CO/0709.4241}.


\bibitem{loday}
{\sc J.-L.~Loday}, {\em Realization of the Stasheff polytope},
Arch. Math. {\bf 83} (2004), 267--278.


\bibitem{shnider_sternberg}
{\sc S. Shnider and S. Sternberg}, {\em Quantum groups: {}From
coalgebas to Drinfeld algebras}, Graduate texts in mathematical
physics, International Press, 1994.

\bibitem{simion}
{\sc R.~Simion}, {\em A type-{B} associahedron}, Adv. Appl. Math.
{\bf 30} (2003), 2--25.


\bibitem{stasheff}
{\sc J.~Stasheff}, {\em Homotopy associativity of H-spaces I, II},
Trans. Amer. Math. Soc. {\bf 108} (1963), 275--312.


\bibitem{stasheff2}
{\sc J.~Stasheff}, {\em {}From operads to ``physically'' inspired
theories}, Operads: Proceedings of Renaissance Conferences
(Hartford, CT/Luminy, 1995), 53--81, Contemp. Math., 202, Americ.
Math. Soc. (1997).


\end{thebibliography}
\end{document}